\documentclass[11pt,leqno]{amsart}
\usepackage{amssymb,amsthm}
\usepackage{amsmath}
\usepackage{setspace}
\usepackage{dcpic,pictexwd}
\usepackage[all]{xy}
\usepackage[pdftex]{graphicx}
\usepackage{mathrsfs}
\usepackage[pdftex]{hyperref}
\usepackage{bbm}

\theoremstyle{definition}
 \newtheorem{definition}{Definition}[section]

\theoremstyle{plain}

\theoremstyle{plain}
 \newtheorem{theorem}[definition]{Theorem}
 
\theoremstyle{definition}
 \newtheorem{example}[definition]{Example}

\theoremstyle{plain}

\theoremstyle{plain}
 \newtheorem{corollary}[definition]{Corollary}

\theoremstyle{remark}
 \newtheorem{remark}[definition]{Remark}

\theoremstyle{definition}

\theoremstyle{plain}

\newcommand{\Ext}{\mathrm{Ext}}
\newcommand{\End}{\mathrm{End}}

\newcommand{\Hom}{\mathrm{Hom}}

\newcommand{\Ca}{\mathcal{C}}
\newcommand{\Fun}{\mathrm{F}}

\newcommand{\Def}{\mathrm{Def}}
\newcommand{\Sets}{\mathrm{Sets}}

\newcommand{\Ob}{\mathrm{Ob}}

\newcommand{\Z}{\mathbb{Z}}
\newcommand{\SEnd}{\underline{\End}}
\newcommand{\A}{\Lambda}
\newcommand{\G}{\Gamma}

\newcommand{\m}{\mathfrak{m}}
\renewcommand{\k}{\Bbbk}

\newcommand{\invlim}{\varprojlim}

\hypersetup{
    bookmarks=true,         
    unicode=false,          
    pdftoolbar=true,        
    pdfmenubar=true,        
    pdffitwindow=false,     
    pdfstartview={FitH},    
    pdftitle={Universal deformation rings and non-negative derived equivalences},    
    pdfauthor={Shengyong Pan and Jose A. Velez-Marulanda},     
    pdfsubject={},   
    pdfcreator={Shengyong Pan and Jose A. Velez-Marulanda},   
    pdfproducer={Shengyong Pan and Jose A. Velez-Marulanda}, 
    pdfkeywords={Universal deformation rings} {Singular Equivalences of Morita Type with Level}{Finitely generated Gorenstein-projective modules}, 
    pdfnewwindow=true,      
    colorlinks=true,       
    linkcolor=red,          
    citecolor=blue,        
    filecolor=magenta,      
    urlcolor=blue           
}

\title[Universal deformation rings and stable equivalences]{On universal deformation rings and stable equivalences of Gorenstein-projective modules} 

\author{Shengyong Pan}
\address{School of Mathematics and Statistics, Beijing Jiaotong University,
Beijing, 100044, P. R. China}
\email{shypan@bjtu.edu.cn}

\author{Jos\'e A. V\'elez-Marulanda}
\address{Department of Data Science, Valdosta State University, Valdosta, GA, U.S.A.}
\email{javelezmarulanda@valdosta.edu}
\address{Facultad de Matem\'aticas e Ingenier\'{\i}as, Fundaci\'on Universitaria Konrad Lorenz, Bogot\'a, Colombia}
\email{josea.velezm@konradlorenz.edu.co}

\keywords{Universal deformation rings \and stable endomorphism rings \and Gorenstein-projective modules \and Non-negative derived equivalences}
\begin{document}
\renewcommand{\labelenumi}{\textup{(\roman{enumi})}}
\renewcommand{\labelenumii}{\textup{(\roman{enumi}.\alph{enumii})}}
\numberwithin{equation}{section}


\begin{abstract}

Let $\k$ be a field and let $\A$ and $\G$ finite dimensional $\k$-algebras. Assume that ${_\G}X_\A$ and ${_\A}Y_\G$ are bimodules that define a singular equivalence of Morita type with level (in the sense of Z. Wang) between $\A$ and $\G$ and which also induce an equivalence between the stable categories of finitely generated Gorenstein-projective modules $\A\textup{-\underline{Gproj}}$ and $\G\textup{-\underline{Gproj}}$. We prove that if $V$ is an indecomposable object in $\A\textup{-\underline{Gproj}}$ with $\SEnd_\A(V)\cong \k$, then $X\otimes_\A V$ is an object in $\G\textup{-\underline{Gproj}}$ such that $\SEnd_\G(X\otimes_\A V)\cong \k$ and the universal deformation rings (in the sense of F.M. Bleher and the second author) $R(\A,V)$ and $R(\G, X\otimes_\A V)$ are isomorphic. This result generalizes the one obtained by the second author assuming that $\A$ and $\G$ are Gorenstein $\k$-algebras.
\end{abstract}
\subjclass[2020]{16G10 \and 16G20 \and 16G50}
\maketitle

\section{Introduction}\label{int}

Throughout this article, we assume that $\k$ is a fixed field of arbitrary characteristic and all our modules are finite dimensional over $\k$. For all finite dimensional $\k$-algebras $\A$, we denote by $\mathcal{D}^b(\A\textup{-mod})$ its bounded derived category and by $\mathcal{D}_{sg}(\A\textup{-mod})$ its {\it singularity category}, i.e. $\mathcal{D}_{sg}(\A\textup{-mod})$ is the Verdier quotient  $\mathcal{D}^b(\A\textup{-mod})/\mathcal{K}^b(\A\textup{-proj})$ where $\mathcal{K}^b(\A\textup{-proj})$ is the category of {\it perfect complexes} over $\A$ (see e.g. \cite[\S 6.2]{krause4}).  In this article, we assume that for all finite dimensional $\k$-algebras $\A$ and all integers $\ell\geq 0$, the $\ell$-th zyzygy of $\A$ as a $\A$-$\A$-bimodule, namely $\Omega_{\A\text{-}\A}^\ell\A$ is indecomposable as a $\A$-$\A$-bimodule.  The following definition is due to Z. Wang (see \cite[Def. 2.1]{wang}). 

\begin{definition}\label{defi:3.2}
Let $\A$ and $\Gamma$ be finite dimensional $\k$-algebras, and let $X$ be a $\Gamma$-$\A$-bimodule and $Y$ a $\A$-$\Gamma$-bimodule. We say that the pair $({_\Gamma}X_\A,{_\A}Y_\Gamma)$ induces a {\it singular equivalence of Morita type with level} $\ell\geq 0$ between $\A$ and $\Gamma$ and say that $\A$ and $\Gamma$ are {\it singularly equivalent of Morita type with level $\ell$} if the following conditions are satisfied:
\begin{enumerate}
\item $X$ is projective as a left $\Gamma$-module and as a right $\A$-module;
\item $Y$ is projective as a left $\A$-module and as a right $\Gamma$-module; 
\item $X\otimes_\A Y\cong \Omega_{\Gamma^e}^\ell \Gamma$ in $\Gamma^e$-\underline{mod};
\item $Y\otimes_\Gamma X\cong \Omega_{\A^e}^\ell \A$ in $\A^e$-\underline{mod}.  
\end{enumerate}
\end{definition}

It follows that under the situation of Definition \ref{defi:3.2}, the bimodules $X$ and $Y$ induce equivalences of singularity categories $X\otimes_\A-:\mathcal{D}_{sg}(\A\textup{-mod})\to \mathcal{D}_{sg}(\G\textup{-mod})$ and $Y\otimes_\G-: \mathcal{D}_{sg}(\G\textup{-mod}) \to \mathcal{D}_{sg}(\A\textup{-mod})$ which are quasi-inverse of each other. These singular equivalances of Morita type with level generalize stable equivalences of Morita type (in the sense of \cite{broue}) as well as singular equivalences of Morita type (in the sense of \cite{chensun,zhouzimm}). Assume that $\A$ is a finite dimensional $\k$-algebra and that $V$ is a left $\A$-module. Recall that $V$ is said to be {\it Gorenstein-projective} if it satisfies the following conditions:
\begin{enumerate}
\item $V$ is {\it reflexive}, i.e. there is an isomorphism of left $\A$-modules $V\cong \Hom_\A(\Hom_\A(V,\A),\A)$;
\item $\Ext_\A^i(V,\A) = 0 = \Ext_\A^i(\Hom_\A(V,\A),\A)$ for all $i\in \Z$. 
\end{enumerate}
We denote by $\A$-Gproj the category of Gorenstein-projective left $\A$-modules, and by $\A\textup{-\underline{Gproj}}$ its stable category.  It is well-known that $\A$-Gproj is a Frobenius category in the sense of \cite[Chap. I, \S 2.1]{happel} and consequently,  $\A\textup{-\underline{Gproj}}$ is a triangulated category (in the sense of \cite{verdier}). Moreover, if $V$ is non-projective Gorenstein-projective, then for all $i\geq 0$, the $i$-th syzygy $\Omega_\A^iV$ is also non-projective Gorenstein-projective, and $\Omega_\A$ induces an autoequivalence $\Omega_\A:\A\textup{-\underline{Gproj}}\to \A\textup{-\underline{Gproj}}$ (see  \cite[Chap. I, \S2.2]{happel}).  Recall that a $\k$-algebra $\A$ is said to be Gorenstein if $\A$ has finite injective dimension as a left $\A$-module and as a right $\A$-module. It follows from a seminal result due to R. O. Buchweitz (see \cite{buchweitz} and e.g. \cite[Thm. 6.2.5]{krause4}) that  if $\A$ is Gorenstein then the triangulated categories $\mathcal{D}_{sg}(\A\textup{-mod})$  and $\A\textup{-\underline{Gproj}}$ are equivalent. 
Assume that $V$ is Gorenstein-projective such that  $\SEnd_\A(V)\cong \k$. Then it follows from \cite[Thm. 1.2(ii)]{bekkert-giraldo-velez} that $V$ has a universal deformation ring $R(\A,V)$ (in the sense of \cite{blehervelez}) which is a local complete commutative Noetherian $\k$-algebra with residue field isomorphic to $\k$. It was proved by the second author in \cite[Thm. 1.2 (i)]{velez4} that under this situation, $\SEnd_\A(\Omega V)\cong \k$ and the universal deformation rings $R(\A,V)$ and $R(\A,\Omega V)$ are isomorphic. 
Assume that $\G$ as another finite dimensional $\k$-algebra and that  ${_\G}X_\A$ and ${_\A}Y_\G$ are bimodules that induce a singular equivalence of Morita type with level (as in Definition \ref{defi:3.2}) between $\A$ and $\G$. It follows from \cite[Thm. 1.2 (ii)]{velez4} that if $\A$ and $\G$ are both Gorenstein then the left $\G$-module $X\otimes_\A V$ is also Gorenstein-projective with $\SEnd_\G(X\otimes_\A V)\cong \k$ and the universal deformation ring $R(\G, X\otimes_\A V)$ is isomorphic to $R(\A,V)$. To prove this result, the second author used the fact that since $\A$ and $\G$ are both Gorenstein,  the bimodules $X$ and $Y$ induce equivalences of triangulated categories $X\otimes_\A-:\A\textup{-\underline{Gproj}}\to \G\textup{-\underline{Gproj}}$ and $Y\otimes_\G-:\G\textup{-\underline{Gproj}}\to \G\textup{-\underline{Gproj}}$ that are quasi-inverse of each other (see \cite[Prop. 3.7]{skart}). This allowed him to provide immediate applications of universal deformation rings to Morita and triangular matrix $\k$-algebras, to singular equivalences induced by homological epimorphisms (in the sense of \cite{geigle}), and to $2$-recollements of triangulated categories (in the sense of \cite{qin2}) all this for Gorenstein algebras. However, it follows from \cite[Prop. 4.5]{wang} that the hypothesis of $\A$ and $\G$ being both Gorenstein can be replaced for that of $\Hom_\G(X,\G)$ and $\Hom_\A(Y,\A)$ have both finite projective dimensions as a left $\A$-module and a left $\G$-module, respectively. Moreover,  it also follows from \cite[Prop. 4.6]{wang} that if there exists a derived equivalence between $\mathcal{D}^b(\A\textup{-mod})$ and $\mathcal{D}^b(\G\textup{-mod})$, then there exists a pair a bimodules ${_\G}X_\A$ and ${_\A}Y_\G$ that induce a singular equivalence of Morita type with level between $\A$ and $\G$ as in Definition \ref{defi:3.2}, which further induce an equivalence between the triangulated categories $\A\textup{-\underline{Gproj}}$ and $\G\textup{-\underline{Gproj}}$.  


Assume further that $\A$ and $\G$ have no semisimple direct summands and that ${_\G}X_\A$ and ${_\A}Y_\G$ are bimodules that induce an stable equivalence of Morita type between $\A$ and $\G$ such that neither $X$ and $Y$ have projective bimodules as direct summands. By \cite[Lemma 4.1]{chen-pan-xi}, there are isomorphisms of bimodules $X\cong \Hom_\A(Y,\A)$ and $Y\cong \Hom_\G(X,\G)$, hence $\Hom_\A(Y,\A)$ and $\Hom_\G(X,\G)$ are projective as left $\G$-module and $\A$-module, respectively. Since as mentioned above, stable equivalences of Morita type are singular equivalences of Morita type with level, we obtain by using \cite[Prop. 4.5]{wang} that $X$ and $Y$ induce an equivalence between $\A\textup{-\underline{Gproj}}$ and $\G\textup{-\underline{Gproj}}$. 

For what is next, we refer the reader to see e.g \cite[Def. 2.2]{gao-ma-wang} for the definition of {\it Morita context algebras}.  Let $\Psi$ be the Morita context algebra
\begin{align}\label{mc1}
\Psi = \begin{pmatrix} \G & Q\\ W& C \end{pmatrix}\
\end{align}
 and which satisfies the following conditions:
 \begin{itemize}
 \item[(MC1):] $C$ is a finite dimensional $\k$-algebra that has finite projective dimension as a $C$-$C$-bimodule;
 \item[(MC2):] $W$ is a $C$-$\G$-bimodule with finite projective dimension as a bimodule, and it is projective as a one-sided module;
 \item[(MC3):] $Q$ is a $\G$-$C$-bimodule with finite projective dimension as a bimodule, and it is projective as a one-sided module. 
 \end{itemize}
 
 It follows from the main result in \cite{gao-ma-wang} that if $\A$ and $\G$ are singularly equivalent of Morita type with level $\ell$ (as in Definition \ref{defi:3.2}), and if $\Xi$ is the Morita context algebra 
 \begin{align}\label{mc2}
 \Xi = \begin{pmatrix} \A & Y\otimes_\G Q\\ W\otimes_\G X& C \end{pmatrix},
 \end{align}
 then there exists a pair of bimodules $({_\Psi}\widetilde{X}_{\Xi}, {_\Xi}\widetilde{Y}_{\Psi})$ that induce a singular equivalence of Morita type with level $\ell$ between $\Psi$ and $\Xi$. Moreover, if we again assume that $\Hom_\G(X,\G)$ and $\Hom_\A(Y,\A)$ have finite projective dimensions as a left $\A$-module and a left $\G$-module, respectively, then the bimodules $\widetilde{X}$ and $\widetilde{Y}$ induce equivalences of triangulated categories $\widetilde{X}\otimes_\Xi-:\Xi\textup{-\underline{Gproj}}\to \Psi\textup{-\underline{Gproj}}$ and $\widetilde{Y}\otimes_\Psi-:\Psi\textup{-\underline{Gproj}}\to \Xi\textup{-\underline{Gproj}}$.  
 
That above discussion provides examples of singular equivalences of Morita type between non-necessarily Gorenstein algebras that induce equivalences between stable categories of Gorenstein-projective modules. One question is to decide whether universal deformation rings of Gorenstein-projective modules are stable under such equivalences. With that inquiry in mind, our goal in this note is to prove the following result. 

\begin{theorem}\label{thm1}
Let $\A$ and $\G$ be finite dimensional $\k$-algebras and assume that ${_\G}X_\A$ and ${_\A}Y_\G$ are bimodules that induce a singular equivalence of Morita type with level  between $\A$ and $\G$ (as in Definition \ref{defi:3.2}) and which also induce  equivalences of triangulated categories $X\otimes_\A-:\A\textup{-\underline{Gproj}}\to \G\textup{-\underline{Gproj}}$ and $Y\otimes_\G-:\G\textup{-\underline{Gproj}}\to \G\textup{-\underline{Gproj}}$ that are quasi-inverse of each other. If $V$ is an indecomposable Gorenstein-projective left $\A$-module with $\SEnd_\A(V)\cong \k$, then $X\otimes_\A V$ is a finitely generated Gorenstein-projective left $\G$-module such that  $\SEnd_\G(X\otimes_\A V)\cong \k$ and the universal deformation rings $R(\A,V)$ and $R(\G, X\otimes_\A V)$ are isomorphic. 
\end{theorem}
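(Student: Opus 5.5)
We follow the strategy of \cite[Thm. 1.2 (ii)]{velez4}, observing that the Gorenstein hypothesis there was only used to guarantee that $X\otimes_\A-$ and $Y\otimes_\G-$ restrict to quasi-inverse equivalences between $\A\textup{-\underline{Gproj}}$ and $\G\textup{-\underline{Gproj}}$, which is now part of the hypothesis.

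\emph{Step 1: Gorenstein-projectivity and endomorphisms.} Since $X\otimes_\A-$ sends $\A$-Gproj into $\G$-Gproj by hypothesis, $X\otimes_\A V$ is Gorenstein-projective. It is finitely generated because $X$ is finite dimensional. The equivalence of stable categories gives $\SEnd_\G(X\otimes_\A V)\cong\SEnd_\A(V)\cong\k$. By \cite[Thm. 1.2(ii)]{bekkert-giraldo-velez}, both $R(\A,V)$ and $R(\G,X\otimes_\A V)$ then exist.

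\emph{Step 2: transporting lifts.} Let $R$ be a complete local commutative Noetherian $\k$-algebra with residue field $\k$ (or, first, an Artinian one), and let $(M,\phi)$ be a lift of $V$ over $R$. Here $M$ is an $R\otimes_\k\A$-module that is free over $R$, and $\phi:\k\otimes_R M\to V$ is an isomorphism. We set $M'=X\otimes_\A M$. Because $X$ is projective as a right $\A$-module, $M'$ is free over $R$, and
\[
\k\otimes_R (X\otimes_\A M)\cong X\otimes_\A(\k\otimes_R M)\xrightarrow{X\otimes\phi} X\otimes_\A V.
\]
So $X\otimes_\A-$ induces a natural transformation of deformation functors $\hat{\Fun}_V\to \hat{\Fun}_{X\otimes_\A V}$ that is compatible with morphisms $R\to R'$. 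The functor $Y\otimes_\G-$ symmetrically gives a transformation in the other direction.

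\emph{Step 3: the composite is the identity on deformations.} This is the main obstacle. By Definition \ref{defi:3.2}(iv), $Y\otimes_\G X\cong\Omega^\ell_{\A^e}\A\oplus P$ with $P$ a projective $\A^e$-module. Therefore
\[
Y\otimes_\G X\otimes_\A M\cong \Omega^\ell_{\A^e}\A\otimes_\A M\oplus P\otimes_\A M.
\]
The term $P\otimes_\A M$ is a projective $R\otimes_\k\A$-module. Tensoring the $\A^e$-projective resolution of $\A$ with $M$, which stays exact because everything splits on the right, shows that $\Omega^\ell_{\A^e}\A\otimes_\A M$ equals $\Omega^\ell_{R\otimes\A}M$ up to $R\otimes\A$-projective summands. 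We then invoke the results used in \cite{velez4}: lifts are determined by their classes up to projective summands over $R\otimes_\k\A$ (the Gorenstein-projective analogue of \cite[Prop. 2.6]{blehervelez}, as in \cite{bekkert-giraldo-velez}). Together with \cite[Thm. 1.2(i)]{velez4}, where $\Omega$ induces $R(\A,V)\cong R(\A,\Omega V)$ via $[M]\mapsto[\Omega_{R\otimes\A}M]$, this shows that the composite transformation equals the isomorphism induced by $\Omega^\ell$. The key point to check carefully is that removing projective summands from an $R$-free lift of a module with $\SEnd\cong\k$ yields a well-defined lift; we will argue this as in \cite{velez4}, using that the indecomposable non-projective summand is unique.

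\emph{Step 4: conclusion.} The same argument applies to the other composite, using condition (iii). It follows that $X\otimes_\A-$ induces a natural isomorphism $\hat{\Fun}_V\cong\hat{\Fun}_{X\otimes_\A V}$ of functors on complete local rings. Since both functors are representable, Yoneda gives $R(\A,V)\cong R(\G,X\otimes_\A V)$.
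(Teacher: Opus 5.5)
Your proposal is correct and is essentially the paper's proof. The paper likewise transports lifts via $X_R\otimes_{R\A}-$, uses $Y_R\otimes_{R\G}X_R\otimes_{R\A}M\cong\Omega^\ell_{R\A}M\oplus(T_\ell)_R$ together with the bijections of Remark \ref{rem2.1}(iv) (independence of $\phi$, adding projective summands, and $\Omega^\ell$), and passes to all of $\widehat{\Ca}$ by continuity. The difference is only in packaging: you get bijectivity because the composites induced by $Y\otimes_\G X\otimes_\A-$ and $X\otimes_\A Y\otimes_\G-$ are bijections, while the paper checks injectivity and surjectivity directly; your derivation of the displayed isomorphism by tensoring the right-split bimodule resolution with $M$ also spells out a step the paper only asserts.
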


It is important to mention that there are examples of finite dimensional $\k$-algebras $\A$ and $\G$ and bimodules ${_\G}X_\A$ and ${_\A}Y_\G$ that induce a singular equivalence of Morita type with level but do not induce an equivalence between $\A\textup{-\underline{Gproj}}$ and $\G\textup{-\underline{Gproj}}$ (see e.g. \cite[Example 7.5]{skart}).

Based on the discussion above, we obtain the following immediate consequence from Theorem \ref{thm1}.  

\begin{corollary}\label{cor3.1}
Let $\A$ and $\G$ be finite dimensional $\k$-algebras and assume that ${_\G}X_\A$ and ${_\A}Y_\G$ are bimodules that induce a singular equivalence of Morita type with level  between $\A$ and $\G$ (as in Definition \ref{defi:3.2}) such that $\Hom_\A(Y,\A)$ and $\Hom_\G(X,\G)$ are of finite dimension as a left $\G$-module and as a left $\A$-module, respectively. If $V$ is a finitely generated Gorenstein-projective indecomposable left $\A$-module with $\SEnd_\A(V)\cong \k$, then $X\otimes_\A V$ is a finitely generated Gorenstein-projective left $\G$-module such that  $\SEnd_\G(X\otimes_\A V)\cong \k$ and the universal deformation rings $R(\A,V)$ and $R(\G, X\otimes_\A V)$ are isomorphic. 
\end{corollary}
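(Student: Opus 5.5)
The plan is to reduce the comparison of universal deformation rings to a comparison of deformation functors. I will show that the exact functor $X\otimes_\A-$ induces a natural isomorphism $\hat{F}_V\cong \hat{F}_{X\otimes_\A V}$ between the (weak) deformation functors on the category $\hat{\mathcal{C}}$ of complete local commutative Noetherian $\k$-algebras with residue field $\k$. The two universal deformation rings, which exist by \cite[Thm. 1.2(ii)]{bekkert-giraldo-velez}, pro-represent these functors, so they will then be isomorphic. I follow the strategy of \cite[Thm. 1.2(ii)]{velez4}, with the Gorenstein hypothesis replaced everywhere by the assumed equivalence $\A\textup{-\underline{Gproj}}\simeq \G\textup{-\underline{Gproj}}$.

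\textbf{Step 1: $X\otimes_\A V$ is Gorenstein-projective with $\SEnd_\G(X\otimes_\A V)\cong\k$.} The hypothesis that $X\otimes_\A-$ restricts to an equivalence $\A\textup{-\underline{Gproj}}\to\G\textup{-\underline{Gproj}}$ means that $X\otimes_\A V$ lies in $\G$-Gproj. Full faithfulness then gives
\[
\SEnd_\G(X\otimes_\A V)\cong\SEnd_\A(V)\cong\k .
\]
Since $X$ is projective as a right $\A$-module, $X\otimes_\A V$ is finitely generated.

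\textbf{Step 2: lifts are sent to lifts.} Let $R\in\hat{\mathcal{C}}$ be Artinian and let $(M,\phi)$ be a lift of $V$ over $R$, that is, a $R\otimes_\k\A$-module $M$, free over $R$, with $\phi:\k\otimes_R M\cong V$. Set $X_R=R\otimes_\k X$. Then
\[
X_R\otimes_{R\otimes\A}M\cong X\otimes_\A M .
\]
This module is $R$-free, because $X$ is right $\A$-projective, so $X\otimes_\A-$ is exact and $X\otimes_\A R\cong R^{\,\cdot}$ on $R$-free modules. Moreover
\[
\k\otimes_R(X\otimes_\A M)\cong X\otimes_\A(\k\otimes_R M)\cong X\otimes_\A V .
\]
Hence $(X\otimes_\A M, X\otimes\phi)$ is a lift of $X\otimes_\A V$. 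This construction respects isomorphisms of lifts and base change $R\to R'$, so it defines a natural transformation $\tau:\hat F_V\to\hat F_{X\otimes_\A V}$ on Artinian objects. Because the universal deformation rings are determined by their restrictions to Artinian rings via continuity, this is enough.

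\textbf{Step 3: $\tau$ is a bijection.} This is the main obstacle, since $Y\otimes_\G X$ is only $\Omega^\ell_{\A^e}\A$ up to projective bimodule summands, not $\A$ itself. Applying $Y\otimes_\G-$ to the lift $X\otimes_\A M$ gives a lift of $Y\otimes_\G X\otimes_\A V$. In $\A$-mod this module is isomorphic to $\Omega^\ell_\A V\oplus P$ with $P$ projective, using condition (iv) of Definition \ref{defi:3.2} together with the fact that $V$ is Gorenstein-projective; the latter is needed so that tensoring the bimodule syzygy sequence with $V$ stays exact.

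I will then invoke two results:
\begin{itemize}
\item \cite[Thm. 1.2(i)]{velez4}: syzygies induce isomorphisms $\hat F_V\cong\hat F_{\Omega V}$.
\item The standard fact, used in \cite{velez4} and \cite{bekkert-giraldo-velez}, that when $\SEnd\cong\k$, adding projective summands does not change the deformation functor. The idea is that lifts of $V\oplus P$ are $M\oplus(R\otimes P)$, since projectives lift uniquely and the relevant Ext groups vanish.
\end{itemize}
Together these identify $\hat F_{Y\otimes_\G X\otimes_\A V}$ with $\hat F_{\Omega^\ell V}\cong\hat F_V$. I will then check that the composite $\hat F_V\to\hat F_{X\otimes V}\to\hat F_{Y\otimes X\otimes V}\cong\hat F_V$ is this identification, and similarly on the $\G$ side. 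This makes $\tau$ both injective and surjective.

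The delicate point is compatibility over $R$. The isomorphism $Y\otimes_\G X\otimes_\A M\cong \Omega^\ell_{R\otimes\A}M\oplus(\text{proj})$ must be produced functorially over $R$. My first attempt will be to tensor the $\A^e$-projective resolution of $\A$ with $M$ over $\A$: this yields an $R$-free projective resolution of $M$, so $\Omega^\ell_{\A^e}\A\otimes_\A M$ is an $\ell$-th syzygy of $M$ as an $R\otimes\A$-module, and reducing modulo $\m_R$ recovers $\Omega^\ell V$.

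Finally, a natural isomorphism of the pro-represented functors yields $R(\A,V)\cong R(\G,X\otimes_\A V)$.
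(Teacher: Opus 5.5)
Your Steps 2 and 3 are essentially the paper's proof of Theorem \ref{thm1}. You define $\tau$ via $X_R\otimes_{R\A}-$ and invert it, up to $\Omega^\ell$ and projective summands, using $Y_R\otimes_{R\G}-$ together with the bijections of Remark \ref{rem2.1}(iv). You then pass to $\widehat{\Ca}$ by continuity. That part is fine.

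The genuine gap is in Step 1. You take as given that $X\otimes_\A-$ restricts to an equivalence $\A\textup{-\underline{Gproj}}\to\G\textup{-\underline{Gproj}}$, but that is the hypothesis of Theorem \ref{thm1}, not of this corollary. The corollary's only extra assumption concerns the duals $\Hom_\G(X,\G)$ and $\Hom_\A(Y,\A)$, and your proposal never uses it. (The statement says ``finite dimension'', which is automatic; as in the discussion before Theorem \ref{thm1}, finite \emph{projective} dimension is meant.) Without that assumption Step 1 can fail, because a singular equivalence of Morita type with level need not send Gorenstein-projective modules to Gorenstein-projective modules (cf. \cite[Example 7.5]{skart}). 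Then $\SEnd_\G(X\otimes_\A V)\cong\k$ and Remark \ref{rem2.1}(iv) on the $\G$-side are unavailable, and so is your bijectivity argument in Step 3.

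The missing link is what the paper uses. By \cite[Prop. 4.5]{wang}, finite projective dimension of $\Hom_\G(X,\G)$ over $\A$ and of $\Hom_\A(Y,\A)$ over $\G$ ensures that $X\otimes_\A-$ and $Y\otimes_\G-$ restrict to equivalences between $\A\textup{-\underline{Gproj}}$ and $\G\textup{-\underline{Gproj}}$. After that, the corollary is an immediate application of Theorem \ref{thm1}.

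You can also see this directly. The functor $X\otimes_\A-$ takes a complete resolution $P^\bullet$ of $V$ to an acyclic complex of projective $\G$-modules, with $\Hom_\G(X\otimes_\A P^\bullet,\G)\cong\Hom_\A(P^\bullet,\Hom_\G(X,\G))$. This complex is acyclic by induction on the projective dimension of $\Hom_\G(X,\G)$, so $X\otimes_\A V$ is Gorenstein-projective, and symmetrically for $Y$. Since $Y\otimes_\G X\otimes_\A-\cong\Omega_\A^\ell$ and $X\otimes_\A Y\otimes_\G-\cong\Omega_\G^\ell$ are autoequivalences of the stable categories, both functors are equivalences.

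A minor correction: exactness of the bimodule syzygy sequence after applying $-\otimes_\A V$ does not need $V$ to be Gorenstein-projective. A projective bimodule resolution of $\A$ already splits as a sequence of right $\A$-modules.
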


Let $\A$, $\G$, ${_\G}X_\A$ and ${_\A}Y_\G$ as in Theorem \ref{thm1}. If $\A$ and $\G$ are Gorenstein, then it follows from \cite[Lemma 4.12]{chen-liu-wang} that $\Hom_\G(X,\G)$ and $\Hom_\A(Y,\A)$ have finite projective dimensions as a left $\A$-module and a left $\G$-module, respectively. Therefore, Corollary \ref{cor3.1} generalizes \cite[Thm. 1.2 (ii)]{velez4}. 

We also have the following consequence of Corollary \ref{cor3.1}, which also generalizes \cite[Prop. 3.2.6]{blehervelez2} from self-injective to more general finite dimensional $\k$-algebras. 

\begin{corollary}\label{cor3.2}
Let $\A$ and $\G$ be finite dimensional $\k$-algebras and assume that ${_\G}X_\A$ and ${_\A}Y_\G$ are bimodules that induce a stable equivalence of Morita type $\A$ and $\G$ (in the sense of \cite{broue}) and such that $X$ and $Y$ do not have any projective bimodules as direct summands. If $V$ is a finitely generated Gorenstein-projective indecomposable left $\A$-module with $\SEnd_\A(V)\cong \k$, then $X\otimes_\A V$ is a finitely generated Gorenstein-projective left $\G$-module such that  $\SEnd_\G(X\otimes_\A V)\cong \k$ and the universal deformation rings $R(\A,V)$ and $R(\G, X\otimes_\A V)$ are isomorphic. 
\end{corollary}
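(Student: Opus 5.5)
The statement to prove is Corollary~\ref{cor3.2}, and the plan is to reduce it directly to Corollary~\ref{cor3.1}, which itself reduces to Theorem~\ref{thm1}.

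\textbf{Step 1: the pair is a singular equivalence of Morita type with level $0$.} Recall that $({_\G}X_\A,{_\A}Y_\G)$ induces a stable equivalence of Morita type in the sense of \cite{broue}. This means $X$ and $Y$ are projective on each side, and there are bimodule isomorphisms $X\otimes_\A Y\cong \G\oplus P$ and $Y\otimes_\G X\cong \A\oplus Q$ with $P,Q$ projective bimodules. Passing to the stable categories of bimodules gives $X\otimes_\A Y\cong \G=\Omega^0_{\G^e}\G$ in $\G^e$-\underline{mod}, and similarly $Y\otimes_\G X\cong \Omega^0_{\A^e}\A$. So conditions (i)--(iv) of Definition~\ref{defi:3.2} hold with $\ell=0$.

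\textbf{Step 2: the duals are projective.} We need the hypothesis of Corollary~\ref{cor3.1} on $\Hom_\A(Y,\A)$ and $\Hom_\G(X,\G)$; in the intended sense this is finite projective dimension. As in the introduction, use \cite[Lemma 4.1]{chen-pan-xi}. Since $X$ and $Y$ have no projective bimodule summands, it gives $X\cong\Hom_\A(Y,\A)$ and $Y\cong\Hom_\G(X,\G)$ as bimodules. Hence $\Hom_\A(Y,\A)$ is projective as a left $\G$-module and $\Hom_\G(X,\G)$ is projective as a left $\A$-module. In particular both have finite projective dimension, and they are finite dimensional since $X$ and $Y$ are.

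\textbf{Main obstacle.} The cited lemma assumes that $\A$ and $\G$ have no semisimple direct summands, which is not stated in the corollary. I would handle this by splitting off semisimple blocks. A semisimple block $B$ of $\A$ has $B\text{-}\underline{\mathrm{Gproj}}=0$, so no $V$ with $\SEnd_\A(V)\cong\k$ lives there. Moreover, a stable equivalence of Morita type without projective summands should restrict to the non-semisimple parts. So one may discard these blocks, or simply adopt the standing assumption from the introduction.

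\textbf{Step 3: conclude.} With these hypotheses verified, Corollary~\ref{cor3.1} (via \cite[Prop.~4.5]{wang} and Theorem~\ref{thm1}) applies to $V$. It shows that $X\otimes_\A V$ is a finitely generated Gorenstein-projective $\G$-module with $\SEnd_\G(X\otimes_\A V)\cong\k$, and that $R(\A,V)\cong R(\G,X\otimes_\A V)$.
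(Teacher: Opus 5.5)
Your proposal is correct and is exactly the paper's argument. You note that a stable equivalence of Morita type is a singular equivalence of Morita type with level $\ell=0$. You then use Lemma 4.1 of Chen--Pan--Xi to get $X\cong\Hom_\A(Y,\A)$ and $Y\cong\Hom_\G(X,\G)$, so both duals are projective, and conclude via Wang's Prop.~4.5 and Theorem~\ref{thm1}, i.e.\ Corollary~\ref{cor3.1}. The ``no semisimple direct summands'' hypothesis you flag is likewise used only implicitly by the paper: it is stated in the introduction but omitted from the corollary, so adopting it as a standing assumption is exactly what the paper does.
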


\begin{example}
Let $\A=\k Q/\langle \rho\rangle$ and $\Gamma=\k Q'/\langle \rho'\rangle$ be the $\k$-algebras whose quivers with relations are given as follows:
\begin{align*}
Q&: \xymatrix@1@=20pt{\underset{1}{\bullet}\ar@/^/[r]^{\alpha}&\underset{2}{\bullet}\ar@/^/[l]^{\beta}},&& \rho=\{\beta\alpha\beta\alpha\},\\
Q'&: \xymatrix@1@=20pt{\underset{1'}{\bullet}\ar@/^/[r]^{x}&\underset{2'}{\bullet}\ar@/^/[l]^{y}\ar@(ur,dr)^{z}},&& \rho'=\{yx,zx,yz,z^2-xy\}.
\end{align*}
By \cite[\S 5]{liu-xi}, $\A$ and $\Gamma$ are stably equivalent of Morita type and their self-injective dimensions on both sides are equal to $2$. One easily verifies that both of them are not self-injective and have infinite global dimension. It was shown in \cite[Example 5.4]{bekkert-giraldo-velez} that there is a unique non-projective Gorenstein-projective indecomposable left $\A$-module $V$,  and a unique non-projective Gorenstein-projective indecomposable left $\G$-module $W$ such that $W$ corresponds (up to addition of projective modules) to $V$ under the stable equivalence of Morita type such that $\SEnd_\A(V)\cong \k \cong \SEnd_\G(W)$ and that the universal deformation rings $R(\A,V)$ and $R(\G,W)$ are both isomorphic to $\k[\![t]\!]/(t^2)$. This verifies Corollary \ref{cor3.2}
\end{example}

Finally, we also have the following consequence from Theorem \ref{thm1} concerning Morita context algebras.

\begin{corollary}\label{cor3.3}
Let $\A$, $\G$, ${_\G}X_\A$ and ${_\A}Y_\G$ be as in Corollary \ref{cor3.1}, and let $\Psi$ and $\Xi$ be as in \eqref{mc1} and \eqref{mc2}, respectively. Let $({_\Psi}\widetilde{X}_{\Xi}, {_\Xi}\widetilde{Y}_{\Psi})$ be bimodules that induce a singular equivalence of Morita type with level $\ell$ between $\Psi$ and $\Xi$. 
\begin{enumerate}
	\item If $\widetilde{V}$ is an indecomposable Gorenstein-projective left $\Xi$-module such that $\SEnd_\Xi(\widetilde{V})\cong \k$, then the left $\Psi$-module $\widetilde{X}\otimes_{\Xi} \widetilde{V}$ is also Gorenstein-projective with $\SEnd_{\Psi}(\widetilde{X}\otimes_{\Xi} \widetilde{V})\cong \k$ and the universal deformation rings $R(\Xi, \widetilde{V})$ and $R(\Psi, \widetilde{X}\otimes_{\Xi} \widetilde{V})$ are isomorphic in $\widehat{\Ca}$.    
	\item Assume further that the $\k$-algebra $C$ in \eqref{mc1} and \eqref{mc2} has finite projective dimension both as a $\Psi$-$\Psi$-bimodule as well as a $\Xi$-$\Xi$-bimodule and that $\A$ and $\G$ are both Gorenstein. 
	\begin{enumerate}
		\item There  exists a pair of bimodules $({_\Xi }\overline{X}_\A, {_\A}\overline{Y}_{\Xi})$ (resp.  $({_\Psi}\overline{U}_\G, {_\G}\overline{Z}_{\Psi})$) that induce a singular equivalence of Morita type with level between $\A$ and $\Xi$ (resp. between $\G$ and $\Psi$). 
		\item Let $V$ an indecomposable Gorenstein-projective left $\A$-module with $\SEnd_\A(V)\cong \k$. Then the left modules $V'= \overline{X}\otimes_\A V$, $V''=\widetilde{X}\otimes_\Xi\overline{X}\otimes_\A V$ and $V''' = \overline{Z}\otimes_{\Psi}\widetilde{X}\otimes_{\Xi}\overline{X}\otimes_\A V$, are all Gorenstein-projective with stable endomorphism ring isomorphic to $\k$ and the universal deformation rings $R(\A,V)$, $R(\Psi,V')$, $R(\Xi, V'')$ and $R(\G,V''')$ are all isomorphic in $\widehat{\Ca}$. 
	\end{enumerate}
\end{enumerate}	
\end{corollary}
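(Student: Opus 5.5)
Corollary \ref{cor3.3} is a chain of applications of Theorem \ref{thm1} and Corollary \ref{cor3.1}, so the plan is to check, for each pair of algebras, that its bimodules induce a singular equivalence of Morita type with level together with an equivalence of the stable categories of Gorenstein-projectives, and then compose the resulting isomorphisms of universal deformation rings.

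For part (i), start from the singular equivalence of Morita type with level between $\A$ and $\G$ given by $X$ and $Y$. By the result of \cite{gao-ma-wang} recalled before Theorem \ref{thm1}, this produces the pair $(\widetilde{X},\widetilde{Y})$ inducing a singular equivalence of Morita type with level $\ell$ between $\Psi$ and $\Xi$. Under the hypotheses of Corollary \ref{cor3.1} (finite projective dimension of $\Hom_\G(X,\G)$ and $\Hom_\A(Y,\A)$), the same discussion gives mutually quasi-inverse triangle equivalences $\widetilde{X}\otimes_\Xi-:\Xi\textup{-\underline{Gproj}}\to \Psi\textup{-\underline{Gproj}}$ and $\widetilde{Y}\otimes_\Psi-$. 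Hence the hypotheses of Theorem \ref{thm1} hold with $(\Xi,\Psi,\widetilde{X},\widetilde{Y})$ in place of $(\A,\G,X,Y)$. Applying Theorem \ref{thm1} to $\widetilde{V}$ then gives part (i) directly.

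For (ii.a), the plan is to realize $\A$ as a corner of $\Xi$ (and $\G$ as a corner of $\Psi$) via the idempotent $e=\begin{pmatrix}1&0\\0&0\end{pmatrix}$. The assumption that $C$ has finite projective dimension as a bimodule over $\Xi$ (resp. $\Psi$) makes the corresponding recollement, or homological epimorphism $\Xi\to\Xi/\Xi e\Xi$, yield a singular equivalence of Morita type with level. Concretely, I would take $\overline{X}=\Xi e$ and $\overline{Y}=e\Xi$, and similarly $\overline{U}=\Psi e'$ and $\overline{Z}=e'\Psi$, and invoke the known criterion (as in \cite{qin2}, or the homological epimorphism results cited in the introduction) that such corner idempotents give singular equivalences of Morita type with level when the quotient algebra has finite projective dimension as a bimodule.

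For (ii.b), Gorensteinness of $\A$ and $\G$ should transfer to $\Xi$ and $\Psi$: they are singularly equivalent of Morita type with level to Gorenstein algebras, and condition (MC1) on $C$ together with the one-sided projectivity of $W$ and $Q$ in (MC2) and (MC3) controls the injective dimensions. Then \cite[Lemma 4.12]{chen-liu-wang} gives the finite projective dimension hypotheses needed in Corollary \ref{cor3.1} for each of the three equivalences $\A\to\Xi$, $\Xi\to\Psi$ and $\Psi\to\G$. Applying Corollary \ref{cor3.1} three times in succession shows that $V'$, $V''$ and $V'''$ are Gorenstein-projective with stable endomorphism ring $\k$, and gives the chain $R(\A,V)\cong R(\Xi,V')\cong R(\Psi,V'')\cong R(\G,V''')$. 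The statement as written pairs $V'$ with $\Psi$ and $V''$ with $\Xi$; from the construction, $V'=\overline{X}\otimes_\A V$ is a $\Xi$-module and $V''$ is a $\Psi$-module, so the roles of $\Psi$ and $\Xi$ there should be swapped.

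The main obstacle will be (ii.a), together with transferring Gorensteinness to $\Xi$ and $\Psi$. One must verify that $\Xi e$ and $e\Xi$ are projective on both sides and that $\overline{Y}\otimes_\Xi\overline{X}\cong\A$ while $\overline{X}\otimes_\A\overline{Y}\cong\Omega^\ell\Xi$ stably. The natural approach is to use the exact sequence $0\to\Xi e\Xi\to\Xi\to C\to 0$ and take syzygies, using the finite projective dimension of $C$ as a bimodule.
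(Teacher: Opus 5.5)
Your part (i) is exactly the paper's argument: \cite[Theorem (2), pg. 239]{gao-ma-wang} supplies the equivalence $\Xi\textup{-\underline{Gproj}}\simeq\Psi\textup{-\underline{Gproj}}$, and then Theorem \ref{thm1} applies. Your remark that $V'=\overline{X}\otimes_\A V$ is a $\Xi$-module and $V''$ is a $\Psi$-module is also correct, so the chain should read $R(\A,V)\cong R(\Xi,V')\cong R(\Psi,V'')\cong R(\G,V''')$. The skeleton of (ii) matches the paper as well. The paper takes (ii.a) from \cite[Example 4.6]{dalezios}, gets Gorensteinness of $\Xi$ and $\Psi$ from \cite[Cor. 4.6 \& Thm. 4.13]{gaopsa}, and then applies \cite[Thm. 1.2 (ii)]{velez4}; your detour through \cite[Lemma 4.12]{chen-liu-wang} and Corollary \ref{cor3.1} amounts to the same thing. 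However, two steps do not go through as you state them.

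First, your explicit pair in (ii.a) violates Definition \ref{defi:3.2}, which needs the same level on both sides. With $\overline{X}=\Xi e$ and $\overline{Y}=e\Xi$ you get $\overline{Y}\otimes_\Xi\overline{X}\cong e\Xi e=\A=\Omega^0_{\A^e}\A$, i.e. level $0$. On the other side, $\overline{X}\otimes_\A\overline{Y}$ is at best $\Xi e\Xi$, which is in general not stably isomorphic to $\Xi$. What the sequence $0\to\Xi e\Xi\to\Xi\to C\to 0$ actually gives, via the horseshoe lemma with $n=\mathrm{pd}_{\Xi^e}C$, is only $\Omega^n_{\Xi^e}(\Xi e\Xi)\cong\Omega^n_{\Xi^e}\Xi$ stably. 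So one bimodule must be shifted, e.g. $\overline{X}=\Omega^n(\Xi e)$ taken as a $\Xi$-$\A$-bimodule. This is still projective on both sides, since $W\otimes_\G X$ is right $\A$-projective. Because ${_\A}e\Xi$ is projective, both composites then become $\Omega^n$.

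You also tacitly identify $\Xi e\otimes_\A e\Xi$ with $\Xi e\Xi$. Your sequence presupposes that the context map $(W\otimes_\G X)\otimes_\A(Y\otimes_\G Q)\to C$ is zero. But then this tensor product, which is generally nonzero, is precisely the kernel of the multiplication map $\Xi e\otimes_\A e\Xi\to\Xi e\Xi$, and it has to be controlled too (e.g. by showing it has finite projective dimension over $\Xi^e$ and shifting further). The paper avoids all of this by quoting \cite[Example 4.6]{dalezios}. If you rely on ``the known criterion'', cite that and drop the incorrect explicit verification.

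Second, and more seriously, you never prove that $\Xi$ and $\Psi$ are Gorenstein, yet every part of (ii.b) except the $\Xi\to\Psi$ step depends on it. (That middle step is fine, because part (i) already provides the equivalence of stable Gorenstein-projective categories.) \cite[Lemma 4.12]{chen-liu-wang}, like \cite[Thm. 1.2 (ii)]{velez4}, requires both algebras to be Gorenstein. Without that, you cannot apply Corollary \ref{cor3.1} to the passage from $\A$ to $\Xi$ or from $\Psi$ to $\G$, so you do not even get that $V'$ and $V'''$ are Gorenstein-projective with stable endomorphism ring $\k$. Your justification combines two claims, and neither is an argument. Being singularly equivalent of Morita type with level to a Gorenstein algebra would need an invariance theorem that you neither state nor cite. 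The claim that (MC1)--(MC3) ``control the injective dimensions'' never says how the injective dimensions of $\Xi$ and $\Psi$ on either side get bounded. The missing ingredient is a Gorensteinness criterion for Morita context algebras, which is exactly what the paper imports from \cite[Cor. 4.6 \& Thm. 4.13]{gaopsa}.
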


\section{Preliminaries}\label{sec2}
Recall that $\k$ denotes a field of any characteristic. We denote by $\widehat{\Ca}$ the category of all complete local commutative Noetherian $\k$-algebras with residue field $\k$. In particular, the morphisms in $\widehat{\Ca}$ are continuous $\k$-algebra homomorphisms that induce the identity map on $\k$.  Let $\A$ be a fixed finite dimensional $\k$-algebra, and let $R$ be a fixed but arbitrary object in $\widehat{\Ca}$.  We denote by $R\A$ the tensor product of $\k$-algebras $R\otimes_\k\A$. Note that if $R$ is an Artinian object in $\widehat{\Ca}$, then $R\A$ is also Artinian (both on the left and the right sides).   If $R$ is an Artinian object in $\widehat{\Ca}$, then we denote by $\Omega_{R\A} M$ the first syzygy of $M$, i.e. $\Omega_{R\A} M$ is the kernel of a projective cover $P\to M$ of $M$ over $R\A$, which is unique up to isomorphism. Let $V$ be a finitely generated left $\A$-module. A {\it lift} $(M,\phi)$ 
of $V$ over $R$ is a finitely generated left $R\A$-module $M$ 
that is free over $R$ 
together with an isomorphism of $\A$-modules $\phi:\k\otimes_RM\to V$. Two lifts $(M,\phi)$ and $(M',\phi')$ over $R$ are {\it isomorphic} 
if there exists an $R\A$-module 
isomorphism $f:M\to M'$ such that $\phi'\circ (\mathrm{id}_\k\otimes_R f)=\phi$.
If $(M,\phi)$ is a lift of $V$ over $R$, we  denote by $[M,\phi]$ its isomorphism class and say that $[M,\phi]$ is a {\it deformation} of $V$ 
over $R$. We denote by $\Def_\A(V,R)$ the 
set of all deformations of $V$ over $R$. The {\it deformation functor} corresponding to $V$ is the 
covariant functor $\widehat{\Fun}_V:\widehat{\Ca}\to \Sets$ defined as follows: for all objects $R$ in $\widehat{\Ca}$, define $\widehat{\Fun}_V(R)=\Def_
\A(V,R)$, and for all morphisms $\theta:R\to 
R'$ in $\widehat{\Ca}$, 
let $\widehat{\Fun}_V(\theta):\Def_\A(V,R)\to \Def_\A(V,R')$ be defined as $\widehat{\Fun}_V(\theta)([M,\phi])=[R'\otimes_{R,\theta}M,\phi_\theta]$, 
where $\phi_\theta: \k\otimes_{R'}
(R'\otimes_{R,\theta}M)\to V$ is the composition of $\A$-module isomorphisms 
\[\k\otimes_{R'}(R'\otimes_{R,\theta}M)\cong \k\otimes_RM\xrightarrow{\phi} V.\] 


Suppose there exists an object $R(\A,V)$ in $\widehat{\Ca}$ and a deformation $[U(\A,V), \phi_{U(\A,V)}]$ of $V$ over $R(\A,V)$ with the 
following property. For all objects $R$ in $\widehat{\Ca}$ and for all deformations $[M,\phi]$ of $V$ over $R$, there exists a morphism $\psi_{R(\A,V),R,[M,\phi]}:R(\A,V)\to R$ 
in $\widehat{\Ca}$ such that 
\[\widehat{\Fun}_V(\psi_{R(\A,V),R,[M,\phi]})[U(\A,V), \phi_{U(\A,V)}]=[M,\phi],\]
and moreover, $\psi_{R(\A,V),R,[M,\phi]}$ is unique if $R$ is the ring of dual numbers $\k[\epsilon]$ with $\epsilon^2=0$.  Then $R(\A,V)$ and $
[U(\A,V),\phi_{U(\A,V)}]$ are called the {\it versal deformation ring} and {\it versal deformation} of $V$, respectively. If the morphism $
\psi_{R(\A,V),R,[M,\phi]}$ is unique for all $R\in\Ob(\widehat{\Ca})$ and deformations $[M,\phi]$ of $V$ over $R$, then $R(\A,V)$ and $[U(\A,V),\phi_{U(\A,V)}]$ are 
called the {\it universal deformation ring} and the {\it universal deformation} of $V$, respectively.  In other words, the universal deformation 
ring $R(\A,V)$ represents the deformation functor $\widehat{\Fun}_V$ in the sense that $\widehat{\Fun}_V$ is naturally isomorphic to the $\Hom$ 
functor $\Hom_{\widehat{\Ca}}(R(\A,V),-)$. 

We denote by $\Fun_V$  the restriction of $\widehat{\Fun}_V$ to the full subcategory of Artinian objects in $\widehat{\Ca}$. Following \cite[\S 2.6]{sch}, we call the set $\Fun_V(\k[\epsilon])$ the tangent space of $\Fun_V$, which has a structure of a $\k$-vector space by \cite[Lemma 2.10]{sch}. 
It was proved in \cite[Prop. 2.1]{blehervelez} that $\Fun_V$ satisfies the Schlessinger's criteria \cite[Thm. 2.11]{sch}, that there exists an isomorphism of $\k$-vector spaces 
\begin{equation}\label{hoch}
\Fun_V(\k[\epsilon])\to \Ext_\A^1(V,V),
\end{equation}
and that $\widehat{\Fun}_V$ is continuous in the sense of \cite[\S 14]{mazur}, i.e. for all objects $R$ in $\widehat{\Ca}$, we have  
\begin{equation}\label{cont}
\widehat{\Fun}_V(R)=\invlim_n \Fun_V(R/\m_R^n), 
\end{equation}
where $\m_R$ denotes the unique maximal ideal of $R$. Consequently, $V$ has always a well-defined versal deformation ring $R(\A,V)$ which is also universal provided that $\End_\A(V)$ is isomorphic to $\k$. It was also proved in \cite[Prop. 2.5]{blehervelez} that versal deformation rings are invariant under Morita equivalences between finite dimensional $\k$-algebras.

\begin{remark}\label{rem2.1}
\begin{enumerate}
\item It follows from the isomorphism of $\k$-vector spaces (\ref{hoch}) that if $\dim_\k \Ext_\A^1(V,V)=r$, then the versal deformation ring $R(\A,V)$ is isomorphic to a quotient algebra of the power series ring  $\k[\![t_1,\ldots,t_r]\!]$ and $r$ is minimal with respect to this property. In particular, if $V$ is a left $\A$-module such that $\Ext_\A^1(V,V)=0$, then $R(\A,V)$ is universal and isomorphic to $\k$ (see \cite[Remark 2.1]{bleher15} for more details).

\item Because of the continuity of the deformation functor as in (\ref{cont}), most of the arguments concerning $\widehat{\Fun}_V$ can be carried out for $\Fun_V$, and thus we are able to restrict ourselves to discuss liftings of $\A$-modules over Artinian objects in $\widehat{\Ca}$.

\item Let $R$ be an Artinian ring in $\widehat{\Ca}$, let $\iota_R:\k\to R$ be the unique morphism in $\widehat{\Ca}$ endowing $R$ with a $\k$-algebra structure, and let $\pi_R:R\to \k$ be the natural projection in $\widehat{\Ca}$. Then $\pi_R\circ \iota_R =\mathrm{id}_\k$. For all projective left (resp. right) $\A$-modules $P$, we let $P_R=R\otimes_{\k,\iota_R}P=R\otimes_\k P$. Then $P_R$ is a projective left (resp. right) $R\A$-module cover of $P$, and $(P_R, \pi_{P,R})$ is a lift of $P$ over $R$, where $\pi_{P,R}$ is the natural isomorphism $\k\otimes_{R, \pi_R}P_R\to P$.

\item Assume that $V$ is an indecomposable Gorenstein-projective left $\A$-module with $\SEnd_\A(V)=\k$.

\begin{enumerate}
\item  It follows by \cite[Thm. 1.2 (i)]{bekkert-giraldo-velez} that a deformation $[M,\phi]$ of $V$ over $R$ in $\widehat{\Ca}$ does not depend on the particular choice of the $\A$-module isomorphism $\phi$. More precisely, if $f:M\to M'$ is an $R\A$-module isomorphism with $(M',\phi')$ a lift of $V$ over $R$, then there exists an $R\A$-module isomorphism $\bar{f}:M\to M'$ such that $\phi'\circ (\mathrm{id}_\k\otimes _R\bar{f})=\phi$, i.e., $[M,\phi]=[M',\phi']$ in $\widehat{\Fun}_V(R)=\Def_\A(V,R)$. Moreover, by \cite[Thm. 1.2 (ii)]{bekkert-giraldo-velez} the versal deformation ring is $R(\A,V)$ is universal.
\item If $P$ is a projective left $\A$-module, then it follows from \cite[Thm. 1.2 (ii)]{bekkert-giraldo-velez} that the versal deformation ring $R(\A,V\oplus P)$ is universal and isomorphic to $R(\A,V)$. This result follows from the fact that for all Artinian objects $R$ in $\widehat{\Ca}$, there is a bijection of set of deformations
\begin{align}
\tau_{V\oplus P,R}: \Fun_{V}(R) \to \Fun_{V\oplus P}(R)
\end{align}
which for all lifts $(M,\phi)$ of $V$ over $R$, $\tau_{P,R}([M,\phi])=[M\oplus P_R, \phi\oplus \pi_{P,R}]$, where $(P_R,\pi_{P,R})$  is as in (iii.a).

\item  Let $\alpha: P(V)\to V$ be a projective left $\A$-module cover of $V$ (which is unique up to isomorphism), and let $\Omega_\A V=\ker\alpha$. Then we obtain a short exact  sequence of left $\A$-modules 
\begin{equation*}\label{projcover}
0\to \Omega_\A V\xrightarrow{\beta}P(V)\xrightarrow{\alpha} V\to 0.
\end{equation*}

Let $(M,\phi)$ be a lift of $V$ over $R$. Since $P_R(V)=R\otimes_{\k, \iota_R}P(V)$ is a projective left  $R\A$-module cover of $P(V)$ by (iii), and  since $\alpha$ is an essential epimorphism, there exists an epimorphism of $R\A$-modules $\alpha_R: P_R(V)\to M$ such that $\phi\circ (\mathrm{id}_\k\otimes\alpha_R)= \alpha\circ \pi_{P(V),R}$. Moreover, it follows by \cite[Claim 1]{bleher15} that $\alpha_R: P_R(V)\to M$ is a projective left $R\A$-module cover of $M$. Let  $\Omega_{R\A}M:=\ker \alpha_R$. Note that since $M$ and $P_R(V)$ are both free over $R$, then $\Omega_{R\A}M$ is also free over $R$, and that there exists an isomorphism of left $\A$-modules $\Omega_{R\A}(\phi):\k\otimes_R\Omega_{R\A}M\to \Omega_\A V$ such that $\pi_{P(V),R}\circ (\mathrm{id}_\k\otimes\beta_R)=\beta\circ \Omega_{R\A}(\phi)$, where $\beta:\Omega_\A V\to P(V)$ and $\beta_R:\Omega_{R\A}M\to P_R(V)$ are the natural inclusions. In particular, $(\Omega_{R\A}M, \Omega_{R\A}(\phi))$ is a lift of $\Omega_\A V$ over $R$. As noted in \cite[Rem. 3.3]{velez4}, it follows from the proof of \cite[Thm. 1.1 (i)]{velez4}, that that for all $i\geq 1$, there is a bijection of set of deformations 
\begin{align}
\tau_{\Omega^iV,R}: \Fun_{V}(R) \to \Fun_{\Omega^iV}(R),
\end{align}
which is natural with respect to morphisms between Artinian objects in $\widehat{\Ca}$ and which for all lifts $(M,\phi)$ of $V$ over $R$,  $\tau_{\Omega^iV,R}([M,\phi])=[\Omega^i_{R\A}M, \Omega^i_{R\A}(\phi)]$.
\end{enumerate}

\item Let $\Gamma$ be another finite dimensional $\k$-algebra, and assume that there exists $\ell\geq 0$ and a pair of bimodules $({_\Gamma}X_\A,{_\A}Y_\Gamma)$ that induce a singular equivalence of Morita type with level $\ell$ between $\A$ and $\Gamma$ as in Definition \ref{defi:3.2}. 
\begin{enumerate}
\item There exist projective bimodules ${_\Gamma} Q_\Gamma$, ${_\Gamma} Q'_\Gamma$, ${_\A}P_\A$, and ${_\A}P'_\A$ such that.

\begin{enumerate}
\item $X\otimes_\A Y\oplus Q'\cong \Omega_{\Gamma^e}^\ell\Gamma\oplus Q$ as $\Gamma$-$\Gamma$-bimodules;
\item $Y\otimes_\Gamma X\oplus P'\cong \Omega_{\A^e}^\ell\A\oplus P$ as $\A$-$\A$-bimodules.
\end{enumerate}
\noindent
Then by tensoring at both sides of (ii) with $V$ over $\A$, we obtain an isomorphism of left $\A$-modules
\begin{equation*}
Y\otimes_\Gamma X\otimes_\A V\oplus (P'\otimes_\A V)\cong \Omega_\A^\ell V\oplus T'_\ell\oplus (P\otimes_\A V),
\end{equation*}
where $T'_\ell$, $P'\otimes_\A V$ and $P\otimes_\A V$ are projective left $\A$-modules. This implies that  $\Omega_\A^\ell V$ and $Y\otimes_\Gamma X\otimes_\A V$ are isomorphic  indecomposable objects in $\A$-\underline{Gproj}. In particular, $\Omega_\A^\ell V$ does not have projective direct summands, and thus by the Krull-Schmidt-Azumaya Theorem and by (i), we obtain that there exists a finitely generated projective left $\A$-module $T_\ell$ such that there is an isomorphism of left $\A$-modules
\begin{equation}\label{eqn2.9}
Y\otimes_\Gamma X\otimes_\A V\cong \Omega_\A^\ell V\oplus T_\ell. 
\end{equation} 

Similarly, if $W$ is an indecomposable Gorenstein-projective left $\Gamma$-module, it follows that there exists a projective left $\Gamma$-module $S_\ell$ such that there exists an isomorphism of left $\Gamma$-modules
\begin{equation}\label{eqn2.10}
X\otimes_\A Y\otimes_\Gamma W\cong \Omega_\Gamma^\ell W\oplus S_\ell. 
\end{equation} 
\item Let $R$ be a fixed Artinian object in $\widehat{\Ca}$. It follows that $X_R=R\otimes_\k X$ is projective as a left $R\Gamma$-module and as a right $R\A$-module, and $Y_R=R\otimes_\k Y$ is projective 
as a left $R\A$-module and as a right $R\Gamma$-module, and both are free over $R$. Note also that $X_R\otimes_{R\A}Y_R\cong R\otimes_\k(X\otimes_\A Y)$ as $R\Gamma$-$R\Gamma$-bimodules and $Y_R\otimes_{R\Gamma} X_R\cong R\otimes_\k (Y\otimes_\A X)$ as $R\A$-$R\A$-bimodules. Moreover, we also have that 
\begin{align*}
Y_R\otimes_{R\Gamma} X_R\oplus P'_R&\cong \Omega_{R\A^e}^\ell(R\A) \oplus P_R&&\text{ as $R\A$-$R\A$-bimodules, and}\\
X_R\otimes_{R\A} Y_R\oplus Q'_R&\cong \Omega_{R\A^e}^\ell(R\Gamma)\oplus Q_R&&\text{ as $R\Gamma$-$R\Gamma$-bimodules},
\end{align*}
where $P_R=R\otimes_\k P$ and $P'_R=R\otimes_\k P'$ (resp. $Q_R=R\otimes_\k Q$ and $Q'_R=R\otimes_\k Q'$) are projective  $R\A$-$R\A$-bimodules (resp. $R\Gamma$-$R\Gamma$-bimodules).
\end{enumerate}

\end{enumerate}
\end{remark}



\section{Proof of Theorem \ref{thm1}}
 
In the following, assume further that  $\A$, $\Gamma$, $V$, $\ell$, ${_\Gamma}X_\A$, ${_\A}Y_\Gamma$, $P$, $P'$, $Q$ , and $Q'$ are all as in Remark \ref{rem2.1}  and let $R$ be a fixed Artinian object in $\widehat{\Ca}$. Let $(M,\phi)$ be a lift of $V$ over $R$. Since $V$ is assumed to be indecomposable, we obtain by using Remark \ref{rem2.1} (v.b) that there is an isomorphism of $R\A$-$R\A$-bimodules
\begin{equation}\label{eqn3.11}
Y_R\otimes_{R\Gamma}X_R\otimes_{R\A} M\cong \Omega_{R\A}^\ell M\oplus (T_\ell)_R, 
\end{equation}
where $T_\ell$ is as in (\ref{eqn2.9}). Note also that $X_R\otimes_{R\A} M$ is free over $R$, and that there exists an isomorphism of left $\Gamma$-modules $\phi_{X_R\otimes_{R\A} M}: \k\otimes_R(X_R\otimes_{R\A} M)\to X\otimes_\A V$.  Thus we can define a morphism between sets of deformations 
\begin{equation}
\tau_{X\otimes_\A V, R}: \Fun_V(R)\to\Fun_{X\otimes_\A V}(R) 
\end{equation}
as follows. For all deformations $[M,\phi]$ of $V$ over $R$, let $\tau_{X\otimes_\A V, R}([M,\phi])=[X_R\otimes_{R\A}M,\phi_{X_R\otimes_{R\A}M}]$. Let $(M,\phi)$ and $(M',\phi')$ be lifts of $V$ over $R$ such that $[M,\phi]=[M',\phi']$ in $\Fun_V(R)$. It follows that there is an isomorphism of left $R\Gamma$-modules $g: X_R\otimes_{R\A}M\to X_R\otimes_{R\A}M'$. Note that by hypothesis, $X\otimes_\A V$ is an indecomposable non-projective Gorenstein-projective left $\Gamma$-module with $\SEnd_{\Gamma}(X\otimes_\A V)=\k$. Thus by Remark \ref{rem2.1} (iv.a), we obtain that $[X_R\otimes_{R\A} M, \phi_{X_R\otimes_{R\A} M}]=[X_R\otimes_{R\A} M', \phi_{X_R\otimes_{R\A} M'}]$ in $\Fun_{X\otimes_\A V}(R)$. This proves that $\tau_{X\otimes_\A V,R}$ is well-defined. Next assume that $[N,\varphi]$ is a deformation of $X\otimes_\A V$ over $R$. If we let $L=Y_R\otimes_{R\Gamma} N$, then $L$ is free over $R$ and there is a composition of isomorphisms between left $\A$-modules which we denote by $\phi_L$ and which is given as follows:
\begin{align*}
\k\otimes_RL= \k\otimes_R(Y_R\otimes_{R\Gamma} N)&\cong (\k\otimes_R Y_R)\otimes_\Gamma (\k\otimes_R N)\\ 
&\cong Y\otimes_\Gamma (X\otimes_\A V)\\
&\cong \Omega_{\A}^\ell V\oplus T_\ell,
\end{align*}
where the last isomorphism follows from (\ref{eqn2.9}). In particular, $(L,\phi_L)$ is a lift of $\Omega_\A^\ell V\oplus T_\ell$ over $R$. Thus
by Remark \ref{rem2.1} (iv.b,iv.c), there exists a lift $(L',\phi_{L'})$ of $\Omega_\A^\ell V$ over $R$ such that $L'\oplus (T_\ell)_R\cong L$ as left $R\A$-modules. On the other hand, by Remark \ref{rem2.1} (iv.c), there exists a lift $(L'',\phi_{L''})$ of $V$ over $R$ such that $\Omega_{R\A}^\ell L''\cong L'$ as left $R\A$-modules. Therefore, $Y_R\otimes_{R\Gamma} N\cong \Omega_{R\A}^\ell L''\oplus (T_\ell)_R$ as left $R\A$-modules.  Note that we also have that $Y_R\otimes_{R\Gamma}X_R\otimes_{R\A} L''\cong \Omega_{R\A}^\ell L'\oplus (T_\ell)_R$ as left $R\A$-modules. Thus we obtain an isomorphism of left $R\A$-modules $Y_R\otimes_{R\Gamma} N\cong Y_R\otimes_{R\Gamma}X_R\otimes_{R\A} L''$, which induces a composition of isomorphisms between left $R\Gamma$-modules as follows:
\begin{align*}
\Omega_{\Gamma}^\ell N\oplus (S_\ell)_R &\cong X_R\otimes_{R\A}Y_R\otimes_{R\Gamma} N\\
&\cong X_R\otimes_{R\A}Y_R\otimes_{R\Gamma}X_R\otimes_{R\A} L''\\
& \cong \Omega_{R\Gamma}^\ell(X_R\otimes_{R\A} L'')\oplus (S_\ell)_R,
\end{align*}
where $S_\ell$ is as in (\ref{eqn2.10}). Thus $\Omega_{R\Gamma}^\ell N\cong \Omega_{R\Gamma}^\ell (X_R\otimes_{R\A} L'')$ as left $R\Gamma$-modules. By Remark \ref{rem2.1}(iv.c) (applied to $X\otimes_\A V$), we obtain that $N\cong X_R\otimes_{R\A}L''$ as left $R\Gamma$-modules and that $[N,\varphi]= [X_R\otimes_{R\A}L'',\phi_{X_R\otimes_{R\A}L''}]$ in $\Fun_{X\otimes_\A V}(R)$. This proves that $\tau_{X\otimes_\A V, R}$ is surjective. Next assume that $[M,\phi]$ and $[M',\phi']$ are deformations of $V$ over $R$ such that $[X_R\otimes_{R\A}M,\phi_{X_R\otimes_{R\A}M}]=[X_R\otimes_{R\A}M',\phi_{X_R\otimes_{R\A}M'}]$. In particular, assume that there exists an isomorphism of left $R\Gamma$-modules $f: X_R\otimes_{R\A}M\to X_R\otimes_{R\A}M'$. Thus we obtain an isomorphism of left $R\A$-modules $\mathrm{id}_{Y_R}\otimes f: Y_R\otimes_{R\Gamma}X_R\otimes_{R\A} M\to Y_R\otimes_{R\Gamma}X_R\otimes_{R\A} M'$. By (\ref{eqn3.11}), we obtain that there exists an isomorphism of left $R\A$-modules $\Omega_{R\A}^\ell f: \Omega_{R\A}^\ell M\to \Omega_{R\A}^\ell M'$ such that $\mathrm{id}_\k \otimes {\Omega_{R\A}^\ell f} =\mathrm{id}_{\Omega_\A^\ell V}$. This together with Remark \ref{rem2.1} (iv.a) implies that $[\Omega_{R\A}^\ell M,\Omega_{R\A}^\ell \phi]= [\Omega_{R\A}^\ell M',\Omega_{R\A}^\ell\phi']$ in $\Fun_{\Omega_\A^\ell V}(R)$, which together with Remark \ref{rem2.1} (iv.c) implies that $[M,\phi] = [M',\phi']$ in $\Fun_V(R)$. This proves that $\tau_{X\otimes_\A V, R}$ is injective. Next assume that $\theta: R\to R'$ is a morphism of Artinian objects in $\widehat{\Ca}$. Let $(M,\phi)$ a lift of $V$ over $R$. Then there is a composition of left $R'\A$-modules as follows:
\begin{align*}
R'\otimes_{R,\theta} (X_R\otimes_{R\A} M)\cong (R'\otimes_{R,\theta}X_R)\otimes_{R'\otimes_{R,\theta}R\A}(R'\otimes_{R,\theta}M)\cong X_{R'}\otimes_{R'\A}M',
\end{align*} 
where $X_{R'}=R'\otimes_\k X$ and $M'= R'\otimes_{R,\theta} M$. This proves that $\tau_{X\otimes_\A V, R}$ is natural with respect of morphism between Artinian objects in $\widehat{\Ca}$. 

The continuity of the deformation functor (see Remark \ref{rem2.1} (ii)) implies that for all objects $R$ in $\widehat{\Ca}$, there is a bijection between sets of deformations
\begin{equation*}
\widehat{\tau}_{X\otimes_\A V, R}: \widehat{\Fun}_V(R)\to \widehat{\Fun}_{X\otimes_\A V}(R),
\end{equation*}
which is natural with respect of morphisms between objects in $\widehat{\Ca}$. Consequently, we obtain that the universal deformation rings $R(\A,V)$ and $R(\A,X\otimes_\A V)$ are isomorphic in $\widehat{\Ca}$. This finishes the proof of Theorem \ref{thm1} (ii). 

\begin{proof}[Proof of Corollary \ref{cor3.3}]
Let $\A$, $\G$, ${_\G}X_\A$, ${_\A}Y_\G$, $\Psi$, $\Xi$, ${_\Psi}\widetilde{X}_{\Xi}$ and  ${_\Xi}\widetilde{Y}_{\Psi}$ be as in in the hypothesis of Corollary \ref{cor3.3}. Statement (i) follows from \cite[Theorem (2), pg. 239]{gao-ma-wang} and Theorem \ref{thm1}. Assume next that $\A$ and $\G$ are both Gorenstein and that $C$ has finite-projective dimension both as a $\Xi$-$\Xi$-bimodule and as a $\Psi$-$\Psi$-bimodule. Thus the statement (ii.a) follows from \cite[Example 4.6]{dalezios}. Moreover,  it follows from \cite[Cor. 4.6 \& Thm. 4.13]{gaopsa} that $\Xi$ and $\Psi$ are also Gorenstein. Thus statement (ii.b) follows from \cite[Thm. 1.2 (ii)]{velez4} and (ii.a).   
\end{proof}

\section{Acknowledgements.} The original idea of this paper was motivated when the first author held a postdoctoral position at the Bishop's University in 2013. He would like to thank Professor Thomas Br\"{u}stle for his hospitality and useful discussions. The first author was also supported by the Fundamental Research Funds for the Central Universities 2024JBMC001 from Beijing Jiaotong university and partially supported by Beijing Natural Science Foundation (1252011). The second author was supported by the research group CIMI in the Centro de Investigaciones at the Fundaci\'on Universitaria Konrad Lorenz, and by the Office of Academic Affairs at the Valdosta State University.

\bibliographystyle{amsplain}
\bibliography{Morita_Type_with_Level-Rev}

\end{document}